\newcommand*{\mailto}[1]{\href{mailto:#1}{\nolinkurl{#1}}}
\newtheorem{theorem}{Theorem}
\newtheorem{proposition}[theorem]{Proposition}
\newtheorem{corollary}[theorem]{Corollary}
\newcommand{\R}{{\mathbb R}}
\newcommand{\be}{\begin{equation}}
\newcommand{\ee}{\end{equation}}
\newcommand{\id}{{\rm 1\hspace{-0.6ex}l}}
\begin{document}

\title{On the Substitution Rule for Lebesgue--Stieltjes Integrals}

\author[N.\ Falkner]{Neil Falkner}
\address{The Ohio State University, Department of Mathematics\\
231 West 18th Avenue\\ Columbus, Ohio 43210\\ USA}
\email{\mailto{falkner@math.ohio-state.edu}}

\author[G.\ Teschl]{Gerald Teschl}
\address{Faculty of Mathematics\\ University of Vienna\\
Nordbergstrasse 15\\ 1090 Wien\\ Austria\\ and International
Erwin Schr\"odinger
Institute for Mathematical Physics\\ Boltzmanngasse 9\\ 1090 Wien\\ Austria}
\email{\mailto{Gerald.Teschl@univie.ac.at}}
\urladdr{\url{http://www.mat.univie.ac.at/~gerald/}}

\thanks{Expo. Math. {\bf 30}, 412--418 (2012)}
\thanks{{\it Teschl's research supported by the Austrian Science Fund (FWF)
under Grant No.\ Y330}}

\keywords{Lebesgue--Stieltjes integrals, substitution rule,
generalized inverse}
\subjclass[2010]{Primary 26A42, 26A48; Secondary 28A25, 28-01}

\begin{abstract}
We show how two change-of-variables formul\ae\ for Lebesgue--Stieltjes
integrals generalize when all continuity hypotheses on the integrators
are dropped. We find that a sort of ``mass splitting phenomenon'' arises.
\end{abstract}

\maketitle

Let $M\colon[a,b]\to\R$ be increasing.%
\footnote{By ``increasing,'' we mean ``non-decreasing.''
Of course, $a$ and $b$ are real numbers with $a<b$.}
Then the measure corresponding to $M$ may be defined to be the unique
Borel measure $\mu$ on $[a,b]$ such that for each continuous function
$f\colon[a,b]\to\R$, the integral $\int_{[a,b]}f\,d\mu$ is equal to the
usual Riemann-Stieltjes%
\footnote{For an excellent exposition of Riemann--Stieltjes integration,
see \cite{atm} and \cite{ru}.}
integral $\int_a^b f(x)\,dM(x)$.
Now let $f\colon[a,b]\to\R$ be a bounded%
\footnote{Here and elsewhere in this paper, we have chosen to focus on
bounded integrands but our statements may be extended in the usual way
to suitable unbounded integrands.}
Borel function.
Then by definition, the Lebesgue-Stieltjes integral
$\int_a^b f(x)\,dM(x)$ is equal to $\int_{[a,b]}f\,d\mu$.
If $a<c<b$, then of course the equation
\[
\int_a^b f(x)\,dM(x)=\int_a^c f(x)\,dM(x)+\int_c^b f(x)\,dM(x)
\]
holds but to understand this properly, one should realize that
the point $c$ contributes $f(c)\mu(\{c\})=f(c)\bigl(M(c+)-M(c-)\bigr)$
to $\int_a^b f(x)\,dM(x)$ and this contribution is split into a
contribution of $f(c)\bigl(M(c)-M(c-)\bigr)$ to $\int_a^c f(x)\,dM(x)$
and a contribution of $f(c)\bigl(M(c+)-M(c)\bigr)$ to $\int_c^b f(x)\,dM(x)$.
This simple kind of splitting was pointed out by Stieltjes himself
(\cite{js}, pp.\ J70--J71, item 38; see also \cite{gb}, pp.\ 27--28, item 38)
and is closely related to the ``mass splitting phenomenon'' in
change-of-variables formul\ae\ alluded to in our abstract.

Now let $N\colon[M(a),M(b)]\to\R$ be increasing and let $\nu$ be the
measure on $[M(a),M(b)]$ corresponding to $N$.
Let $\Lambda=N\circ M$.
Then $\Lambda\colon[a,b]\to\R$ is also increasing.
Let $\lambda$ be the measure on $[a,b]$ corresponding to $\Lambda$.
It is natural to ask what relations exist between the measures
$\lambda$, $\mu$, and $\nu$.

If $N$ is continuous and $W$ is any generalized inverse%
\footnote{To say that $W$ is a generalized inverse for the increasing
function $M$ means that $W$ is an increasing function from $[M(a),M(b)]$
to $[a,b]$ and for each $y$ in the range of $M$, $W(y)$ is in the closure
of the interval $M^{-1}[\{y\}]$.
This concept, with or without this name, is well-established in the
literature.
For further information, see \cite{eh}.}
for the increasing function $M$, then it is not hard to show that $\lambda$
is the image of $\nu$ under $W$ or equivalently,%
\footnote{This equivalence is a standard result about images of measures
under measurable mappings.
See for instance \cite{du}, Theorem 1.6.9.
It is stated there for probability measures but that restriction
is inessential.}
that for each bounded Borel function $f\colon[a,b]\to\R$, we have
\be\label{eq1}
\int_a^b f(x)\,dN(M(x))=\int_{M(a)}^{M(b)} f(W(y))\,dN(y),
\ee
where $\int_a^b f(x)\,dN(M(x))$ means $\int_a^b f(x)\,d\Lambda(x)$.
In the special case where $N(y)\equiv y$,
this goes back to Lebesgue \cite{lb}.

If instead $M$ is continuous, then it is not hard to show that $\nu$ is the
image of $\lambda$ under $M$ or equivalently, that for each bounded Borel
function $g\colon[M(a),M(b)]\to\R$, we have
\be\label{eq2}
\int_a^b g(M(x))\,dN(M(x))=\int_{M(a)}^{M(b)}g(y)\,dN(y).
\ee
This is standard.%
\footnote{See for example  \cite{ry}, Chapter 1, \S 4, Proposition (4.10).
Attention is restricted there to the case where $N$ is right-continuous
but this is not essential.
In fact, if $M$ and $g$ are continuous, then \eqref{eq2} is obvious
by considering Riemann-Stieltjes sums, for then each upper Riemann-Stieltjes
sum for $\int_{M(a)}^{M(b)}g(y)\,dN(y)$ is equal in value to one of the
upper Riemann-Stieltjes sums for $\int_a^b g(M(x))\,dN(M(x))$, and similarly
for lower Riemann-Stieltjes sums, so the upper and lower Riemann-Stieltjes
integrals corresponding to $\int_a^b g(M(x))\,dN(M(x))$ lie between those
corresponding to $\int_{M(a)}^{M(b)}g(y)\,dN(y)$, so the Riemann-Stieltjes
integrals $\int_a^b g(M(x))\,dN(M(x))$ and $\int_{M(a)}^{M(b)}g(y)\,dN(y)$
are equal.
It follows that if $M$ is continuous and $g$ is a bounded Borel function,
then the Lebesgue-Stieltjes integrals $\int_a^b g(M(x))\,dN(M(x))$ and
$\int_{M(a)}^{M(b)}g(y)\,dN(y)$ are equal.
\endgraf
We would like to mention that change-of-variables formul\ae\ for certain
other types of integrals are given in \cite{le} and \cite{ta}.}
In the special case where $N(y)\equiv y$, this is attributed
in \cite{bo} (Vol.~I, Example 3.6.2) to Kolmogorov.

Our aim in this paper is to explain how \eqref{eq1} and \eqref{eq2}
generalize when no continuity assumptions are imposed on $M$ and $N$.
As we shall see, a key role is played by the left and right jumps
of $N$ at the points of the set
\[
H=\{ y\in[M(a),M(b)]:M^{-1}[\{y\}]\hbox{ contains more than one point}\}.
\]
We have chosen the letter $H$ for this set because it is the set
of all levels at which the graph of $M$ has a {\it horizontal\/} portion.
Note that $(M^{-1}[\{y\}])_{y\in H}$ is a pairwise disjoint
family of non-degenerate intervals in $[a,b]$.
Hence $H$ is countable.

Let $X$ and $\Xi$ be the left-continuous and right-continuous
generalized inverses for $M$.
These are the functions from $[M(a),M(b)]$ to $[a,b]$ defined
respectively by
\[
X(y)=\inf\{x\in[a,b]:y\le M(x)\}
\quad\hbox{and}\quad
\Xi(y)=\sup\{x\in[a,b]:M(x)\le y\}
\]
for all $y$ in $[a,b]$.
On $[M(a),M(b)]\setminus H$, we have $X=\Xi$,
while for each $y$ in the range of $M$, $X(y)$ is the left endpoint of
the interval $M^{-1}[\{y\}]$ and $\Xi(y)$ is its right endpoint.
It is easy to check that a function $W\colon[M(a),M(b)]\to\R$ is a generalized
inverse for $M$ if and only if $X\le W\le \Xi$.
In particular, $X$ and $\Xi$ are indeed generalized inverses for $M$.

\begin{proposition}
Suppose $N$ is right-continuous%
\footnote{By convention, we consider $N$ to be right-continuous
at $M(b)$ and we consider $N(M(b)+)$ to be $N(M(b))$.}
at $y$ for each $y$ in $H$.
Then $\lambda$ is the image of $\nu$ under $X$ and for each bounded
Borel function $f\colon[a,b]\to\R$, we have
\be\label{eq3}
\int_a^b f(x)\,dN(M(x))=\int_{M(a)}^{M(b)}f(X(y))\,dN(y).
\ee
\end{proposition}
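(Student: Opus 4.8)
The plan is to establish the measure identity $\lambda=X_*\nu$ first; then \eqref{eq3} follows at once, since $\int_a^b f(x)\,dN(M(x))$ means $\int_{[a,b]}f\,d\lambda$ while $\int_{M(a)}^{M(b)}f(X(y))\,dN(y)$ means $\int_{[M(a),M(b)]}(f\circ X)\,d\nu$, and these agree by the standard change-of-variables formula for images of measures recalled above (note that $X$, being monotone, is Borel measurable). To prove $\lambda=X_*\nu$ it suffices, since both measures are finite, to show they assign the same mass to each interval $[a,x]$ with $x\in[a,b]$: such intervals form a $\pi$-system which generates the Borel $\sigma$-algebra of $[a,b]$ and contains $[a,b]$ itself, so equality of the two measures then follows from the $\pi$--$\lambda$ theorem.

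I would next record the elementary ``Galois'' property of the left-continuous inverse: for $x\in[a,b]$ and $y\in[M(a),M(b)]$, one has $X(y)\le x$ if and only if $y\le M(x+)$, with the convention $M(b+)=M(b)$; this is immediate from $X(y)=\inf\{x:y\le M(x)\}$ and the monotonicity of $M$. Hence $X^{-1}[[a,x]]=\{y\in[M(a),M(b)]:y\le M(x+)\}=[M(a),M(x+)]$, so that
\[
(X_*\nu)([a,x])=\nu\bigl([M(a),M(x+)]\bigr)=N\bigl(M(x+)+\bigr)-N(M(a)),
\]
with the convention $N(M(b)+)=N(M(b))$, whereas $\lambda([a,x])=\Lambda(x+)-\Lambda(a)=\Lambda(x+)-N(M(a))$. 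Thus the entire proposition reduces to the single identity $\Lambda(x+)=N\bigl(M(x+)+\bigr)$, valid for every $x\in[a,b]$.

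Proving this identity is the crux, and the only place the hypothesis on $N$ is used. Write $\Lambda(x+)=\lim_{t\downarrow x}N(M(t))$, the case $x=b$ being immediate from the conventions, and distinguish two cases according to the behaviour of $M$ just to the right of $x$. If $M(t)>M(x+)$ for every $t>x$, then $M(t)\downarrow M(x+)$ through values strictly exceeding $M(x+)$ as $t\downarrow x$ (in particular $M(x+)<M(b)$), so monotonicity of $N$ gives $N(M(t))\to N\bigl(M(x+)+\bigr)$, as wanted. Otherwise $M$ is constant on some interval $(x,x+\delta]$, with value necessarily equal to $M(x+)$, so that $\Lambda(x+)=N(M(x+))$; but then $M^{-1}[\{M(x+)\}]\supseteq(x,x+\delta]$ is non-degenerate, hence $M(x+)\in H$, and the assumed right-continuity of $N$ at $M(x+)$ gives $N(M(x+))=N\bigl(M(x+)+\bigr)$. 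Either way $\Lambda(x+)=N\bigl(M(x+)+\bigr)$. I expect this case distinction --- in particular the observation that a flat stretch of $M$ immediately to the right of $x$ forces the relevant level into $H$, which is precisely where right-continuity of $N$ has been assumed --- to be the one substantive step, everything else being routine bookkeeping with one-sided limits and with the endpoint conventions at $a$ and $b$.
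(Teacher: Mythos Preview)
Your proof is correct and follows essentially the same route as the paper's: the Galois relation $X(y)\le x\iff y\le M(x+)$, the case split according to whether $M$ has a flat stretch just to the right of $x$ (forcing the level into $H$) or is strictly increasing there, and a $\pi$--$\lambda$ argument on intervals $[a,x]$. The only difference is cosmetic: the paper restricts to the dense set $G$ of points where $M$ and $\Lambda$ are right-continuous (so that $M(x+)=M(x)$ and $\Lambda(x+)=\Lambda(x)$), whereas you work with all $x$ and carry the right limits $M(x+)$, $\Lambda(x+)$ throughout, which is marginally cleaner.
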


\begin{proof}
It is easy to check that for each $x$ in $[a,b)$
and each $y$ in $[M(a),M(b)]$,
we have $X(y)\le x$ if and only if $y\le M(x+)$.
Let $G$ be the set of all $x$ in $[a,b)$ such that $M$ and $\Lambda$
are both right-continuous at $x$.
Then $[a,b]\setminus G$ is countable.
Hence $G$ is dense in $[a,b]$.
Let $x$ be in $G$.
Then
$\nu\bigl(X^{-1}\bigl[[a,x]\bigr]\bigr)
=\nu([M(a),M(x+)])
=\nu([M(a),M(x)])
=N(M(x)+)-N(M(a))$.
Now either for each $x'$ in $(x,b]$, we have $M(x)<M(x')$,
or there exists $x'$ in $(x,b]$ such that $M(x)=M(x')$.
Consider the case where for each $x'$ in $(x,b]$, we have $M(x)<M(x')$.
Then since $x$ is in $G$, $M(x)<M(x')\to M(x)$ as $x'\downarrow x$,
so $N(M(x'))\to N(M(x)+)$ as $x'\downarrow x$.
But again, since $x\in G$, $N(M(x'))=\Lambda(x')\to\Lambda(x)=N(M(x))$
as $x'\downarrow x$.
Hence $N(M(x)+)=N(M(x))$.
Now consider the case where there exists $x'$ in $(x,b]$
such that $M(x)=M(x')$.
Then $M=M(x)$ on $[x,x']$, so $M(x)$ is in $H$, so $N(M(x)+)=N(M(x))$
by assumption.
Thus in any case, $N(M(x)+)=N(M(x))$.
Therefore
$\nu\bigl(X^{-1}\bigl[[a,x]\bigr]\bigr)
=N(M(x))-N(M(a))
=\Lambda(x)-\Lambda(a)$.
But since $x$ is in $G$, $\Lambda(x)-\Lambda(a)=\lambda([a,x])$.
Thus $\lambda([a,x])=\nu\bigl(X^{-1}\bigl[[a,x]\bigr]\bigr)$.
This holds for each $x$ in $G$.
Let $\mathcal P$ be the set of all intervals of the form $[a,x]$
with $x\in G$.
Then $\mathcal P$ is a $\pi$-system on $[a,b]$
and since $G$ is dense in $[a,b]$,
$\mathcal P$ generates the Borel $\sigma$-field on $[a,b]$.
As we've just seen,
$\mathcal P$ is contained in the set $\mathcal L$ of all Borel sets
$E\subseteq[a,b]$ such that $\lambda(E)=\nu(X^{-1}[E])$.
Note that $[a,b]\in\mathcal L$ because $\lambda([a,b])=\Lambda(b)-\Lambda(a)
=N(M(b))-N(M(a))=\nu([M(a),M(b)])=\nu(X^{-1}[[a,b]])$.
Hence $\mathcal L$ is a $\lambda$-system on $[a,b]$.
(The $\lambda$ in $\lambda$-system does not refer to our measure $\lambda$.)
It follows that for each Borel set $E\subseteq[a,b]$,
$\lambda(E)=\nu(X^{-1}[E])$, by the $\pi$-$\lambda$ theorem.
(See, for instance, \cite{du}, Theorem A.1.4.)
In other words, $\lambda$ is the image of $\nu$ under $X$,
as claimed.
Equation \eqref{eq3} follows from this.
\end{proof}

Similarly, we have:

\begin{proposition}
Suppose $N$ is left-continuous%
\footnote{By convention, we consider $N$ to be left-continuous
at $M(a)$ and we consider $N(M(a)-)$ to be $N(M(a))$.}
at $y$ for each $y$ in $H$.
Then $\lambda$ is the image of $\nu$ under $\Xi$ and
for each bounded Borel function $f\colon[a,b]\to\R$, we have
\be\label{eq4}
\int_a^b f(x)\,dN(M(x))=\int_{M(a)}^{M(b)}f(\Xi(y))\,dN(y).
\ee
\end{proposition}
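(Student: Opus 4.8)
The plan is to reduce this to the preceding Proposition by means of two reflections. Let $\rho\colon[a,b]\to[a,b]$ and $\sigma\colon[M(a),M(b)]\to[M(a),M(b)]$ be the order-reversing involutions given by $\rho(x)=a+b-x$ and $\sigma(y)=M(a)+M(b)-y$, and put $\tilde M=\sigma\circ M\circ\rho$ and $\tilde N=-N\circ\sigma$. Then $\tilde M\colon[a,b]\to[M(a),M(b)]$ is increasing with $\tilde M(a)=M(a)$ and $\tilde M(b)=M(b)$, the function $\tilde N\colon[M(a),M(b)]\to\R$ is increasing, and $\tilde N\circ\tilde M=-\Lambda\circ\rho$. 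So once I know that the hypothesis of the preceding Proposition holds for the pair $\tilde M,\tilde N$, I may apply that Proposition to this pair.

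First I would check that everything transforms the way one expects under these reflections. Since $\rho$ and $\sigma$ are bijections, $\tilde M^{-1}[\{y\}]=\rho\bigl(M^{-1}[\{\sigma(y)\}]\bigr)$, so the set of levels at which the graph of $\tilde M$ has a horizontal portion is $\sigma[H]$. Because $\sigma$ is order-reversing, the right limit of $\tilde N$ at a point $y$ is $-N\bigl(\sigma(y)-\bigr)$, so $\tilde N$ is right-continuous at $y$ precisely when $N$ is left-continuous at $\sigma(y)$; hence the hypothesis that $N$ be left-continuous at each point of $H$ is exactly the hypothesis that $\tilde N$ be right-continuous at each point of $\sigma[H]$, and the endpoint conventions correspond correctly (the convention at $\tilde M(b)=M(b)$ for $\tilde N$ becomes the stated convention at $M(a)$ for $N$). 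Working from the defining formula for the left-continuous generalized inverse, one finds that the left-continuous generalized inverse $\tilde X$ of $\tilde M$ satisfies $\tilde X=\rho\circ\Xi\circ\sigma$, so that $\tilde X\circ\sigma=\rho\circ\Xi$. Finally, comparing Riemann--Stieltjes sums under the substitutions induced by $\rho$ and by $\sigma$ shows that the measure associated with $-\Lambda\circ\rho$ is the image $\rho_*\lambda$ of $\lambda$ under $\rho$ and the measure associated with $-N\circ\sigma$ is $\sigma_*\nu$.

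With these facts in hand, applying the preceding Proposition to $\tilde M,\tilde N$ gives that the measure of $\tilde N\circ\tilde M$ is the image of the measure of $\tilde N$ under $\tilde X$, that is, $\rho_*\lambda=\tilde X_*\bigl(\sigma_*\nu\bigr)=(\tilde X\circ\sigma)_*\nu=(\rho\circ\Xi)_*\nu=\rho_*\bigl(\Xi_*\nu\bigr)$; applying $\rho_*$ to both sides and using that $\rho$ is an involution yields $\lambda=\Xi_*\nu$, so $\lambda$ is the image of $\nu$ under $\Xi$, and then \eqref{eq4} follows from this just as \eqref{eq3} followed from the corresponding fact in the preceding proof. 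I expect the only real work to be the bookkeeping of the second paragraph: there is nothing deep there, but one must verify that the generalized inverses, the ``horizontal'' set, the one-sided continuity hypothesis, the endpoint conventions, and the passage from a distribution function to its measure all behave correctly under the two reflections. Alternatively, one could avoid the reduction and simply mirror the preceding proof line by line, replacing $X$ by $\Xi$, the equivalence ``$X(y)\le x\iff y\le M(x+)$'' by ``$\Xi(y)\ge x\iff M(x-)\le y$'' for $x$ in $(a,b]$, the set $G$ by the set $G'$ of points of $(a,b]$ at which $M$ and $\Lambda$ are both left-continuous, and the intervals $[a,x]$ by the intervals $[x,b]$ with $x\in G'$; the two-case argument then yields $N(M(x)-)=N(M(x))$ for $x\in G'$, and the $\pi$-$\lambda$ theorem concludes as before.
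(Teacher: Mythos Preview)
Your proposal is correct. The paper itself offers no proof beyond the single word ``Similarly,'' which amounts to the mirror argument you outline in your final sentence: replace $X$ by $\Xi$, use the equivalence $\Xi(y)\ge x\iff y\ge M(x-)$ for $x\in(a,b]$, work with the set $G'$ of points where $M$ and $\Lambda$ are left-continuous, and run the $\pi$--$\lambda$ argument on the intervals $[x,b]$.

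Your primary route via the reflections $\rho$ and $\sigma$ is a genuinely different packaging of the same symmetry. Rather than rewriting the previous proof by hand, you encode the duality as an honest reduction, so that Proposition~1 is invoked once and the rest is functorial bookkeeping. The cost is that one must check several compatibilities (how $H$, the one-sided continuity of $N$, the generalized inverses, and the passage from distribution function to measure behave under $\rho$ and $\sigma$), and you have done this correctly; in particular the identity $\tilde X=\rho\circ\Xi\circ\sigma$ and the pushforward relations $\tilde\lambda=\rho_*\lambda$, $\tilde\nu=\sigma_*\nu$ are right. The benefit is conceptual clarity: it makes explicit \emph{why} the two propositions are dual rather than merely asserting that the argument goes through mutatis mutandis. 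Either approach is acceptable here; the mirror argument is shorter to write out, while the reflection argument would pay off more if one had several such dual statements to prove.
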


When no continuity condition is imposed on $N$, then $\lambda$
need not be the image of $\nu$ under any point mapping.
Instead, for each $y$ in $H$, the mass that $\nu$ assigns
to $\{y\}$ is split in $\lambda$ between the singletons
$\{X(y)\}$ and $\{\Xi(y)\}$.  This was alluded to above in
our abstract and is explained in detail in our main result:

\begin{theorem}
Let $N_1$ be the increasing function that is obtained from $N$
by removing the jumps that $N$ has at the points of $H$.
For each $y$ in $H$, let 
\[
\Delta N(y,-)=N(y)-N(y-)
\quad\text{and}\quad
\Delta N(y,+)=N(y+)-N(y)
\]
be the left and right jumps of $N$ at $y$ respectively.
Then for each bounded Borel function $f\colon[a,b]\to\R$, we have
\be\label{eq5}
\aligned
\int_a^b f(x)\,dN(M(x))
={}&\int_{M(a)}^{M(b)}f(X(y))\,dN_1(y)\\
 &+\sum_{y\in H}f(X(y))\Delta N(y,-)\\
 &+\sum_{y\in H}f(\Xi(y))\Delta N(y,+).\\
\endaligned
\ee
Furthermore,
$X$ may be replaced by $\Xi$ in the first term on the right in \eqref{eq5}.
\end{theorem}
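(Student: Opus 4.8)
The plan is to decompose $N$ (equivalently, the measure $\nu$) into a part that is continuous at every point of $H$ and a complementary pure-jump part supported on $H$, to treat the first part by the two Propositions already proved, and to compute the second part by an explicit elementary calculation.

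Concretely, I would proceed as follows. For each $y\in H$, let $N^{(y)}\colon[M(a),M(b)]\to\R$ be the increasing function that equals $0$ on $[M(a),y)$, equals $\Delta N(y,-)$ at $y$, and equals $\Delta N(y,-)+\Delta N(y,+)$ on $(y,M(b)]$. Since $\Delta N(y,-)+\Delta N(y,+)=\nu(\{y\})$ and $\sum_{y\in H}\nu(\{y\})\le N(M(b))-N(M(a))<\infty$, the sum $N_2:=\sum_{y\in H}N^{(y)}$ converges uniformly on $[M(a),M(b)]$; set $N_1:=N-N_2$. One checks that $N_1$ is increasing (its increment over any subinterval $[y,y']$ equals $N(y')-N(y)$ minus a subsum of the jumps of $N$ lying in $[y,y']$, hence is nonnegative) and that $N_1$ is continuous at every point of $H$ (precisely the jump of $N$ there has been removed), so this $N_1$ is the function named in the theorem. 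Writing $\Lambda_i:=N_i\circ M$ and letting $\lambda_i$ be the measure of $\Lambda_i$, we get $\Lambda=\Lambda_1+\Lambda_2$ and hence $\lambda=\lambda_1+\lambda_2$, so that $\int_a^b f(x)\,dN(M(x))=\int f\,d\lambda_1+\int f\,d\lambda_2$, and it remains to identify the two pieces.

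For the first piece: since $N_1$ is continuous, in particular right-continuous, at each point of $H$, the first Proposition applied with $N_1$ in place of $N$ gives $\int f\,d\lambda_1=\int_{M(a)}^{M(b)}f(X(y))\,dN_1(y)$, which is the first term on the right in \eqref{eq5}; since $N_1$ is also left-continuous at each point of $H$, the second Proposition yields the same integral with $X$ replaced by $\Xi$, which is the concluding assertion of the theorem. For the second piece, $\Lambda_2=\sum_{y\in H}N^{(y)}\circ M$ and correspondingly $\lambda_2=\sum_{y\in H}\lambda^{(y)}$, where $\lambda^{(y)}$ is the measure of $N^{(y)}\circ M$. Fix $y_0\in H$ and put $p=\Delta N(y_0,-)$, $q=\Delta N(y_0,+)$. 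Because $M^{-1}[\{y_0\}]=[X(y_0),\Xi(y_0)]$ with $X(y_0)<\Xi(y_0)$, the function $N^{(y_0)}\circ M$ equals $0$ on $[a,X(y_0))$, equals $p$ on $[X(y_0),\Xi(y_0)]$, and equals $p+q$ on $(\Xi(y_0),b]$, so its measure is $p\,\delta_{X(y_0)}+q\,\delta_{\Xi(y_0)}$. Summing over $y_0\in H$ and integrating $f$ (the interchange of sum and integral being legitimate because $f$ is bounded and $\sum_{y\in H}\nu(\{y\})<\infty$) gives $\int f\,d\lambda_2=\sum_{y\in H}f(X(y))\Delta N(y,-)+\sum_{y\in H}f(\Xi(y))\Delta N(y,+)$. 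Adding the two pieces yields \eqref{eq5}.

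The only point requiring genuine care is the bookkeeping that makes $N_1$ simultaneously increasing and continuous at the points of $H$; after that the argument reduces to two invocations of the already-proved Propositions plus the computation of the measure of an explicit two-step staircase function. In this formulation the ``mass splitting'' becomes transparent: it reflects nothing more than the fact that the two ends $X(y_0)$ and $\Xi(y_0)$ of the plateau $M^{-1}[\{y_0\}]$ are distinct points, so that the left jump of $N$ at $y_0$ is deposited at $X(y_0)$ while the right jump is deposited at $\Xi(y_0)$.
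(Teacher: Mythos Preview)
Your proof is correct and follows essentially the same decomposition as the paper: split off from $N$ the jumps at points of $H$, apply the two Propositions to the resulting $N_1$, and handle the jump part separately. The only difference is in that last step: the paper further separates the jump part into $N_2=\sum_{y\in H}\Delta N(y,-)\,\id_{[y,M(b)]}$ (right-continuous) and $N_3=\sum_{y\in H}\Delta N(y,+)\,\id_{(y,M(b)]}$ (left-continuous) and invokes Propositions~1 and~2 once more on these, whereas you keep the jump part as a single $N_2$ and compute the measure of each $N^{(y_0)}\circ M$ by hand. Your route makes the ``mass splitting'' slightly more visible; the paper's is more uniform in that everything passes through the two Propositions.

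One small imprecision: you write $M^{-1}[\{y_0\}]=[X(y_0),\Xi(y_0)]$, but this preimage need not contain its endpoints (e.g., $M$ may jump past $y_0$ at $X(y_0)$), so your pointwise description of $N^{(y_0)}\circ M$ at $X(y_0)$ and $\Xi(y_0)$ can be off. This is harmless, since the associated measure depends only on one-sided limits and is still $p\,\delta_{X(y_0)}+q\,\delta_{\Xi(y_0)}$; it would be cleaner to argue directly from those limits.
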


\begin{proof}
For each $y$ in $H$, observe that
$\Delta N(y,+)\ge0$ and $\Delta N(y,-)\ge0$,
let
\[
N^y_-=\Delta N(y,-)\id_{[y,M(b)]}
\qquad\hbox{and}\qquad
N^y_+=\Delta N(y,+) \id_{(y,M(b)]},
\]
and observe that
$N^y_-$ is right-continuous and $N^y_+$ is left-continuous.
Let $N_2=\sum_{y\in H}N^y_-$ and $N_3=\sum_{y\in H}N^y_+$.
Note that these series converge uniformly on $[M(a),M(b)]$,
because $\sum_{y\in H}[\Delta N(y,-)+\Delta N(y,+)]=\nu(H)<\infty$.
By definition,
\[
N_1=N-N_2-N_3,
\]
so $N=N_1+N_2+N_3$.
Now $N_1$, $N_2$, and $N_3$ are increasing on $[M(a),M(b)]$,
$N_2$ is right-continuous, $N_3$ is left-continuous,
and for each $y\in H$, $N_1$ is continuous at $y$.
Let $\nu_1$, $\nu_2$, and $\nu_3$ be the measures
corresponding to $N_1$, $N_2$, and $N_3$ respectively.
Let $H^c=[M(a),M(b)]\setminus H$.
Then $X=\Xi$ on $H^c$.
Also, for each Borel set $E\subseteq[M(a),M(b)]$, we have
$\nu(H^c\cap E)=\nu_1(E)$ and $\nu(H\cap E)=\nu_2(E)+\nu_3(E)$.
Let $f\colon[a,b]\to\R$ be a bounded Borel function.
By \eqref{eq3} and \eqref{eq4},
\[
\int_a^b f(x)\,dN_1(M(x))
=\int_{M(a)}^{M(b)}f(X(y))\,dN_1(y)
=\int_{M(a)}^{M(b)}f(\Xi(y))\,dN_1(y).
\]
By \eqref{eq3},
\[
\int_a^b f(x)\,dN_2(M(x))
=\int_{M(a)}^{M(b)}f(X(y))\,dN_2(y)
=\sum_{y\in H}f(X(y))\Delta N(y,-).
\]
By \eqref{eq4},
\[
\int_a^b f(x)\,dN_3(M(x))
=\int_{M(a)}^{M(b)}f(\Xi(y))\,dN_3(y)
=\sum_{y\in H}f(\Xi(y))\Delta N(y,+).
\]
The result follows by addition.
\end{proof}

\begin{corollary}
Equation \eqref{eq1} still holds if $N$ is just continuous at each point of $H$.
In particular, if $M$ is strictly increasing,
then \eqref{eq1} holds with no continuity assumption on $N$.
\end{corollary}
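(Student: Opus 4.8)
The plan is to derive the corollary directly from the main theorem by examining what happens to the two "jump" sums in \eqref{eq5} under the stated hypothesis. First I would observe that if $N$ is continuous at each point of $H$, then for every $y\in H$ we have $\Delta N(y,-)=N(y)-N(y-)=0$ and $\Delta N(y,+)=N(y+)-N(y)=0$. Hence both sums $\sum_{y\in H}f(X(y))\Delta N(y,-)$ and $\sum_{y\in H}f(\Xi(y))\Delta N(y,+)$ vanish identically, and \eqref{eq5} collapses to
\[
\int_a^b f(x)\,dN(M(x))=\int_{M(a)}^{M(b)}f(X(y))\,dN_1(y).
\]
Next I would note that since $N$ is continuous at each point of $H$, the function $N_1$ obtained by removing the jumps of $N$ at points of $H$ is simply $N$ itself; equivalently, the measures $\nu_2$ and $\nu_3$ in the proof of the theorem are zero, so $\nu_1=\nu$ and $N_1=N$. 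Substituting this in gives exactly \eqref{eq1} with $W=X$, which is the first assertion.

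For the second assertion, I would invoke the definition of $H$: it is the set of levels at which the graph of $M$ has a horizontal portion, i.e.\ where $M^{-1}[\{y\}]$ contains more than one point. If $M$ is strictly increasing, then $M^{-1}[\{y\}]$ is a singleton for every $y$ in the range of $M$, and empty otherwise, so $H=\varnothing$. The hypothesis "$N$ continuous at each point of $H$" is then vacuous, so \eqref{eq1} holds for every increasing $N$ with no continuity assumption whatsoever. (One should also remark that when $H=\varnothing$ the functions $X$ and $\Xi$ still need not be inverses of $M$ in the naive sense, since $M$ need not be surjective or continuous, but $X$ remains a legitimate generalized inverse and that is all \eqref{eq1} requires.)

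I do not expect any genuine obstacle here: the corollary is a degenerate case of the theorem, and the only thing to be careful about is the bookkeeping around the boundary conventions for $N(M(a)-)$ and $N(M(b)+)$ — but since those conventions are already built into the statements of Propositions for $X$ and $\Xi$ and hence into \eqref{eq5}, nothing extra is needed. If anything is worth a sentence of comment, it is the observation that "continuous at each point of $H$" is strictly weaker than "continuous," which is what makes the corollary a real (if modest) improvement on the classical formula \eqref{eq1} as stated in the introduction.
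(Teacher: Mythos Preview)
Your approach matches the paper's: both derive the corollary from the theorem by noting that the two jump sums in \eqref{eq5} vanish and that $N_1=N$. There is, however, a genuine gap in your first assertion. Equation \eqref{eq1} is stated in the paper for an \emph{arbitrary} generalized inverse $W$, not just for $W=X$; the corollary is asserting exactly that generality. You establish the identity only for $W=X$ and then stop. What is missing is the observation that $\nu(H)=0$ (since $H$ is countable and $N$ is continuous at each of its points), together with the fact that any generalized inverse $W$ satisfies $X\le W\le\Xi$ with equality on $[M(a),M(b)]\setminus H$. From these two facts one gets $f(W(y))=f(X(y))$ for $\nu$-a.e.\ $y$, and hence the full \eqref{eq1}. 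The paper's proof records precisely this point. In the strictly increasing case your argument is complete as written, since $H=\varnothing$ forces $X=\Xi$ and hence every generalized inverse coincides with $X$.
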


\begin{proof}
If $N$ is continuous at each point of $H$, then 
the two sums on the right in \eqref{eq5} vanish, $N_1=N$,
$\nu(H)=0$, and if $W$ is any generalized inverse for $M$, then $X\le W\le\Xi$,
with equality on $[M(a),M(b)]\setminus H$.
If $M$ is strictly increasing, then $H$ is empty, so it is vacuously true
that $N$ is continuous at each point of $H$.
\end{proof}

\begin{corollary}
For each bounded Borel function $g$ on the range of $M$, we have
\be\label{eq6}
\aligned
\int_a^b g(M(x))\,dN(M(x))
={}&\int_{M(a)}^{M(b)}g(M(X(y)))\,dN_1(y)\\
 &+\sum_{y\in H}g(M(X(y)))\Delta N(y,-)\\
 &+\sum_{y\in H}g(M(\Xi(y)))\Delta N(y,+),\\
\endaligned
\ee
where the notation is as in the theorem.
Furthermore, $X$ may be replaced by $\Xi$ in the first term on the
right in \eqref{eq6}.
\end{corollary}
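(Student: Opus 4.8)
The plan is to deduce \eqref{eq6} from the theorem by the evident substitution $f=g\circ M$. First I would verify that $g\circ M$ is an admissible integrand for the theorem, that is, a bounded Borel function on $[a,b]$. Boundedness is immediate from the boundedness of $g$. For measurability I would recall that an increasing function $M\colon[a,b]\to\R$ is Borel and then argue by pulling back: given a Borel set $B\subseteq\R$, the set $g^{-1}[B]$ is Borel in the subspace $\ran M$, so $g^{-1}[B]=A\cap\ran M$ for some Borel $A\subseteq\R$, whence $(g\circ M)^{-1}[B]=M^{-1}[A]$ is Borel. (One could instead observe that $\ran M$ is itself a Borel subset of $[M(a),M(b)]$ — its complement there is a countable union of intervals arising from the jumps of $M$ — and extend $g$ by $0$; but the pullback argument avoids even this observation.)

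Second, I would simply apply the theorem to the bounded Borel function $f=g\circ M$. Since $f(x)=g(M(x))$ for every $x\in[a,b]$, the left-hand side of \eqref{eq5} becomes $\int_a^b g(M(x))\,dN(M(x))$; and since $f(X(y))=g(M(X(y)))$ and $f(\Xi(y))=g(M(\Xi(y)))$, the three terms on the right-hand side of \eqref{eq5} become precisely the three terms on the right-hand side of \eqref{eq6}. Thus \eqref{eq6} is nothing but \eqref{eq5} specialized to this $f$. The closing assertion, that $X$ may be replaced by $\Xi$ in the first term on the right in \eqref{eq6}, follows at once from the corresponding assertion in the theorem, again with $f=g\circ M$.

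I do not expect a genuine obstacle here; the only point requiring a moment's care is the measurability of $g\circ M$, in view of the fact that $g$ is a priori defined only on $\ran M$, and that is dispatched by the pullback argument above. If desired, one could append the remark that for $y\in H$ the points $X(y)$ and $\Xi(y)$ are the two endpoints of the interval $M^{-1}[\{y\}]$, on which $M$ is constantly equal to $y$, so $g(M(X(y)))=g(y)=g(M(\Xi(y)))$; hence the last two sums in \eqref{eq6} collapse to $\sum_{y\in H}g(y)\bigl[\Delta N(y,-)+\Delta N(y,+)\bigr]=\sum_{y\in H}g(y)\,\nu(\{y\})$, which exhibits clearly how the mass that was split in passing from $\nu$ to $\lambda$ recombines once one composes with $M$.
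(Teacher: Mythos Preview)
Your main argument is correct and is exactly the paper's own proof: the paper writes simply ``Let $f=g\circ M$ in \eqref{eq5}.''  Your added care about the Borel measurability of $g\circ M$ is a welcome elaboration and is handled correctly.

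However, your closing optional remark is wrong.  You assert that for $y\in H$ one has $M(X(y))=y=M(\Xi(y))$, on the grounds that $X(y)$ and $\Xi(y)$ are the endpoints of the interval $M^{-1}[\{y\}]$, on which $M$ is identically $y$.  But those endpoints need not belong to $M^{-1}[\{y\}]$: the interval can be half-open at either end when $M$ jumps there.  For example, take $M\colon[0,2]\to\R$ with $M=0$ on $[0,1)$ and $M=1$ on $[1,2]$.  Then $0\in H$, $M^{-1}[\{0\}]=[0,1)$, $\Xi(0)=1$, and $M(\Xi(0))=1\ne0$.  Likewise, if $M=0$ on $[0,1]$ and $M=2$ on $(1,2]$, then $2\in H$, $X(2)=1$, and $M(X(2))=0\ne2$.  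So in general the two sums in \eqref{eq6} do \emph{not} collapse to $\sum_{y\in H}g(y)\,\nu(\{y\})$; this is precisely where the mass-splitting survives.  The paper notes immediately after the corollary that \eqref{eq6} reduces to \eqref{eq2} when $M$ is continuous, and continuity of $M$ is exactly the extra hypothesis your simplification would need.
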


\begin{proof}
Let $f=g\circ M$ in \eqref{eq5}.
\end{proof}

Note that \eqref{eq6} is a generalization of \eqref{eq2},
because in the special case where $M$ is continuous,
it is clear that $M(X(y))=y=M(\Xi(y))$ for each $y$ in $[M(a),M(b)]$.

Since equations \eqref{eq5} and \eqref{eq6} are a bit complicated,
it is worth noting that they yield some simpler-looking inequalities
when $f$ and $g$ are monotone.
For each increasing function $f\colon[a,b]\to\R$ and for each $y$ in $H$,
we have $f(X(y))\le f(\Xi(y))$, so by \eqref{eq5},
\be\label{eq7}
\int_{M(a)}^{M(b)}f(X(y))\,dN(y)
\le\int_a^b f(x)\,dN(M(x))
\le\int_{M(a)}^{M(b)}f(\Xi(y))\,dN(y).
\ee
Let $g\colon[M(a),M(b)]\to\R$ be increasing
and let $f$ be the increasing function $g\circ M$.
If $M$ is left-continuous, then for each $y$ in $[M(a),M(b)]$,
we have $M(\Xi(y))\le y$, so from the right-hand inequality in \eqref{eq7},
we get
\be\label{eq8}
\int_a^b g(M(x))\,dN(M(x))\le\int_{M(a)}^{M(b)}g(y)\,dN(y).
\ee
If instead $M$ is right-continuous, then for each $y$ in $[M(a),M(b)]$,
we have $y\le M(X(y))$, so from the left-hand inequality in \eqref{eq7},
we get
\be\label{eq9}
\int_{M(a)}^{M(b)}g(y)\,dN(y)\le\int_a^b g(M(x))\,dN(M(x)).
\ee
If $g$ is decreasing rather than increasing,
then the inequalities \eqref{eq8} and \eqref{eq9} must be reversed.
To see this, just replace $g$ by $-g$.

A related inequality, in the special case where $g(x)\equiv x^n$,
is established by a different method in \cite{be}, where
it is applied to prove a Gronwall lemma for Lebesgue--Stieltjes integrals.
An application of \eqref{eq6} can be found in \cite{kst}.

Our results can easily be extended, with appropriate modifications,
to the case where $[a,b]$ is replaced by any interval $I$ and $[M(a),M(b)]$
is replaced by the smallest interval $J$ containing the range of $M$.

\bigskip
\noindent
{\bf Acknowledgments.}
The authors thank Jonathan Eckhardt, Fritz Gesztesy, Aleksey Kostenko, Erik Talvila, and Harald Woracek  for helpful discussions and
hints with respect to the literature.

\end{document}